\documentclass[aos,preprint,dvipsnames]{imsart}


\RequirePackage[OT1]{fontenc}
\RequirePackage{graphicx}

\RequirePackage{amsthm,amsmath,amsfonts,amssymb}

\RequirePackage[cmex10]{amsmath}
\RequirePackage[round]{natbib}
\RequirePackage[colorlinks,citecolor=blue,urlcolor=blue]{hyperref}
\usepackage[english]{babel}   
\usepackage{graphicx}
\usepackage{epstopdf}
\usepackage{amsfonts}
\usepackage{amsmath,amssymb}		
\usepackage{bbm} 	
\usepackage{amsthm}
\usepackage{xcolor}
\usepackage{accents}
\usepackage{easy-todo}
\usepackage{mathtools}
\usepackage{paralist} 
\RequirePackage{natbib}

\interfootnotelinepenalty=10000



\newtheorem{prop}{Proposition}
\newtheorem*{prop*}{Proposition}


\newtheorem*{theorem*}{Theorem}

\newtheorem*{assumption*}{Assumption}

\numberwithin{equation}{section}



\newcommand{\rN}{\mathbb{N}}             

\newcommand{\prob}{\mathbb{P}}


\newcommand{\E}{\mathcal{E}}                     

\newcommand{\Field}{\mcB}



\newcommand{\mcB}{\mathcal{B}}



\newcommand{\thetab}{\boldsymbol\theta}

\newcommand{\Thetab}{\boldsymbol\Theta}

\newcommand{\FieldSub}{{\cal F}}
\newcommand{\expect}{{\mathrm E}}

\makeatletter
\newcommand*\rel@kern[1]{\kern#1\dimexpr\macc@kerna}
\newcommand*\widebar[1]{%
  \begingroup
  \def\mathaccent##1##2{%
    \rel@kern{0.8}%
    \overline{\rel@kern{-0.8}\macc@nucleus\rel@kern{0.2}}%
    \rel@kern{-0.2}%
  }%
  \macc@depth\@ne
  \let\math@bgroup\@empty \let\math@egroup\macc@set@skewchar
  \mathsurround\z@ \frozen@everymath{\mathgroup\macc@group\relax}%
  \macc@set@skewchar\relax
  \let\mathaccentV\macc@nested@a
  \macc@nested@a\relax111{#1}%
  \endgroup
}
\makeatother

\definecolor{dkgreen}{rgb}{0.1,0.6,0.1}

\graphicspath {{figures/}}

\startlocaldefs
\endlocaldefs

\begin{document}

\begin{frontmatter}
\title{
 On Bounded Completeness \\ and 
 the $L_1$-Denseness  of Likelihood Ratios}

\ \\ \today

\begin{aug}
\author{\fnms{Marc} \snm{Hallin,\!}\thanksref{m1}\ead[label=e1,email]{mhallin@ulb.ac.be}}
\author{\fnms{Bas J.M.} \snm{Werker,\!\!\!}\thanksref{m2}
\ead[label=e2,email]{b.j.m.werker@tilburguniversity.edu}}
\and
\author{\fnms{Bo} \snm{Zhou}\thanksref{m3}\ead[label=e3,email]{bzhou@vt.edu}}
\runauthor{Hallin, Werker, and Zhou.}
\address{
\thanksmark{m1}Universit\'{e} libre de Bruxelles, Belgium, \thanksmark{m2}Tilburg University, the Netherlands, and \thanksmark{m3}Virginia Tech, Blacksburg, VA }
\end{aug}

\begin{abstract}
The classical concept of bounded completeness and its relation to sufficiency and ancillarity play a fundamental role in unbiased estimation, unbiased testing, and the validity of inference in the presence of nuisance parameters. In this short note, we  provide a direct proof of a  little-known result by \cite{Far62} on a characterization of bounded completeness   based 
 on an $L^1$ denseness property of the linear span of likelihood ratios.  As an application, we show that an experiment with infinite-dimensional observation space is boundedly complete iff suitably chosen restricted   subexperiments with finite-dimensional observation spaces are.  
\end{abstract}
\begin{keyword}[class=MSC]
\kwd[Primary ]{62A01}\kwd{62B05}
\end{keyword}

\begin{keyword}
\kwd{sufficiency}
\kwd{completeness}
\kwd{ancillarity}
\kwd{Brownian motion}
\kwd{Mazur's theorem}
\end{keyword}
\end{frontmatter}

 \section{Introduction} The concept of (bounded) completeness, in connection with sufficiency and ancillarity, is playing a major role at the foundations of statistical inference: unbiased point estimation (the so-called Lehmann-Scheff\' e Theorem), unbiased testing (similarity and Neyman $\alpha$-structure), and statistical decision in the presence of nuisance parameters. These subjects have  generated an abundant literature, some controversy, and several puzzles that remain unsettled or undecidable up to now. 
 
 The first explicit appearances of the concept of completeness are in \cite{LehSchf47, LehSchf50, LehSchf55}, who coined  the terminology. The   complex interrelations between sufficiency, completeness, and ancillarity  are the main research interest of R.R. Bahadur's early career  publications: see \cite{Bahad52, Bahad54, Bahad55a, Bahad55b, Bahad57, Bahad79}, in close interaction with other major contributions such as   \cite{basu1955, basu1958, basu1959,KandT75}.  
 A  complete bibliography of the subject is impossible here; we refer to \cite{Leh81,Stig92, Ghosh02}, or \cite{Ghoshetal.10} for references and comments, to \cite{Stig01} for a historical perspective. 
 
Completeness and bounded completeness bear a clear relation to   functional analysis\footnote{Citing  \cite{MandelRusch87}, ``it would be very surprising if,  when coining the term {\em complete}, Lehmann and Scheff\' e did not have in mind the definition of a {\it complete subset} in a Hilbert space $H$, namely,  {\em A subset $C\subset H$ is {\it complete in $H$} if the only element $h\in H$ which is orthogonal to all elements of $C$ is~$h=0$.''}}  which, however,  seldom has been exploited. Among the few exceptions are results by \cite{Far62}, \cite{MandelRusch87}, \cite{Plachky77}, \cite{PlachRukh91}. \cite{Far62}, in particular,  provides an interesting characterization of bounded completeness based on an $L^1$ denseness property of the linear span of likelihood ratios. His result, however, appears at the end of a paper on algebras of functions,  as an informal comment on a more general statement (Corollary 4) 
most statisticians  would find hard to interpret, and with proofs  involving unfamiliar  algebraic developments.  In this short note, we are providing a simple statement, with a direct and elementary proof based on Mazur's Theorem, of a slightly more general version of this result. That version allows for a useful characterization of bounded completeness based on bounded completeness of subexperiments.  As an application, we show how this provides a straightforward proof of the bounded completeness of Brownian drift experiments.   
  
  \section{A characterization of bounded completeness}\label{sec2} Let 
\begin{equation}\label{defexperiment}
{\cal E}\coloneqq \left({\cal X}, {\cal B}, {\cal P}\coloneqq\{{\rm P}_{\thetab} \vert\, \thetab\in\Thetab\}\right)
\end{equation}
 denote a statistical experiment with observation space  
 $({\cal X},{\cal B})$    and distribution family $\cal P$ indexed by a parameter $\thetab$ with values in some (possibly infinite-dimensional)   
parameter space~$\Thetab$.   
Assu\-ming that all distributions~${\rm P}_{\thetab}\in{\cal P}$ are    absolutely continuous with respect to some {\it privileged dominating measure}\footnote{A privileged measure exists as soon as the family $\cal P$ is a dominated one (see \cite{HS49} or Chapter~4 in \cite{Rusch14});   it has the property that, denoting by~$B$ an element of $\cal B$, ${\rm P}_{\boldsymbol 0}(B)=0$ iff~${\rm P}_{\thetab}(B)=0$ for all $\thetab\in\Thetab$.} ${\rm P}_{\boldsymbol 0}$ (which may or may not belong to $\cal P$), denote by~${\rm dP}_{\thetab}/{\rm dP}_{\boldsymbol 0}$ the Radon-Nikodym derivative of~${\rm P}_{\thetab}$ with respect to ${\rm P}_{\boldsymbol 0}$ and consider the set 
\begin{equation}\label{defS}{\cal S}\coloneqq \text{{\rm span}}\big(\left\{ {\rm dP}_{\thetab}/{\rm dP}_{\boldsymbol 0}:\, \thetab\in\Thetab
 \right\} \big)
\end{equation}
 of all  linear combinations of $\cal E$'s likelihood ratios.\footnote{This is a (minor) abuse of terminology: actually, ${\rm dP}_{\thetab}/{\rm dP}_{\boldsymbol 0}$ is a {\it likelihood ratio} if ${\rm P}_{\boldsymbol 0}\in {\cal P}$, a {\it likelihood} if~${\rm P}_{\boldsymbol 0}\notin {\cal P}$.} Obviously, denoting by ${\rm E}_{\thetab}$ the expectation under  ${\rm P}_{\thetab}$, ${\rm E}_{\boldsymbol 0}({\rm dP}_{\thetab}/{\rm dP}_{\boldsymbol 0})=1$ for all $\thetab$, so that ${\cal S}$ is a subset of  the space $L^1({\cal B}, {\rm P}_{\boldsymbol 0})$ of all integrable $\cal B$-measurable random variables $Y$ equipped with the norm $\Vert Y\Vert_1\coloneqq {\rm E}_{\boldsymbol 0}(\vert Y\vert)$. 
 
 Next consider a sub-$\sigma$-field ${\cal B}_0\subseteq{\cal B}$. Associated with that sub-$\sigma$-field is a   subexperi\-ment~$\E_{{\cal B}_0}\coloneqq \left({\cal X},{\cal B}_0,{\cal P}
 \right)$  
 of $\E$   with the ${\cal B}_0$-measurable  likelihood ratios ${\rm E}_{\boldsymbol 0}\left[{\rm dP}_{\thetab}/{\rm dP}_{\boldsymbol 0}\left\vert\right. 
{\cal B}_0 \right]$. Call $\E_{{\cal B}_0}$ a {\it restricted subexperiment} of $\E$. Similar to~\eqref{defS}, define 
\begin{equation}\label{defSB0}
{\cal S}_{{\cal B}_0}\coloneqq 
\text{{\rm span}}
\big(\left\{ 
{\rm E}_{\boldsymbol 0}\left[{\rm dP}_{\thetab}/{\rm dP}_{\boldsymbol 0}\left\vert\right. {\cal B}_0\right]: \thetab\in\Thetab
 \right\} \big).
\end{equation}

 We say that the $\sigma$-field ${\cal B}_0$ (equivalently, the restricted subexperiment\footnote{\cite{Far62} only considers the ${\cal B}_0={\cal B}$ case ({\it ``full''} bounded completeness).} $\E_{{\cal B}_0}$)  is {\it (boundedly) complete} in $\cal E$ if any~${\cal B}_0$-measurable (bounded\footnote{For simplicity, we throughout  use ``bounded" for ``essentially bounded;'' recall that a random variable $Y$ is essentially bounded if there exists a constant $0\leq M < \infty$ such that ${\rm P}_{\thetab} (\vert Y\vert \leq M) =1$ for all $\thetab\in\Thetab$. })  random variable $Y$ such that ${\rm E}_{\thetab}(Y)=0$ for all $\thetab\in\Thetab$ is  ${\rm P}_{\thetab}$-a.s.\  equal to zero for all $\thetab\in\Thetab$ (equivalently, ${\rm P}_{\boldsymbol 0}$-a.s.); the family $\cal P$ or the experiment $\cal E$ itself are called (boundedly) complete if~$\cal B$ is (boundedly) complete in $\cal E$. \medskip 
 
 Denote by $L^1({\cal B}_0, {\rm P}_{\boldsymbol 0})$   the space of  ${\rm P}_{\boldsymbol 0}$-integrable  ${\cal B}_0$-measurable real-valued functions and by $L^\infty({\cal B}_0, {\rm P}_{\boldsymbol 0})$ the space of ${\rm P}_{\boldsymbol 0}$-a.s.\ bounded ${\cal B}_0$-measurable ones: clearly, ${\cal S}_{{\cal B}_0}$ is a subset of~$L^1({\cal B}_0, {\rm P}_{\boldsymbol 0})$. We then have the following characterizations, which extends to restricted subexperiments the corresponding result of \cite{Far62}.
 
 \begin{prop}\label{prop1}  The sub-$\sigma$-field ${\cal B}_0\subseteq{\cal B}$ (equivalently, the restricted experiment $\E_{{\cal B}_0}$) is boundedly complete in $\cal E$ iff ${\cal S}_{{\cal B}_0}$ is dense in~$L^1({\cal B}_0, {\rm P}_{\boldsymbol 0})$.
  \end{prop}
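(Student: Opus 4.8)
The plan is to read Proposition~\ref{prop1} as an instance of the classical duality between denseness of a linear subspace and triviality of its annihilator, applied to the pairing between $L^1({\cal B}_0,{\rm P}_{\boldsymbol 0})$ and its dual $L^\infty({\cal B}_0,{\rm P}_{\boldsymbol 0})$. That dual identification is the reason ``bounded'' appears on the completeness side: the continuous linear functionals on $L^1({\cal B}_0,{\rm P}_{\boldsymbol 0})$ are exactly integration against elements $Y\in L^\infty({\cal B}_0,{\rm P}_{\boldsymbol 0})$, the identification being legitimate because ${\rm P}_{\boldsymbol 0}$ is a finite (hence $\sigma$-finite) measure.

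First I would record the elementary conditioning identity that makes the two sides talk to each other. For any bounded ${\cal B}_0$-measurable $Y$ and any $\thetab\in\Thetab$, pulling the ${\cal B}_0$-measurable factor $Y$ inside the conditional expectation and then using the tower property gives
\begin{equation*}
{\rm E}_{\boldsymbol 0}\big[Y\,{\rm E}_{\boldsymbol 0}[{\rm dP}_{\thetab}/{\rm dP}_{\boldsymbol 0}\,\vert\,{\cal B}_0]\big]={\rm E}_{\boldsymbol 0}\big[Y\,{\rm dP}_{\thetab}/{\rm dP}_{\boldsymbol 0}\big]={\rm E}_{\thetab}(Y).
\end{equation*}
Hence a bounded ${\cal B}_0$-measurable $Y$ annihilates every generator ${\rm E}_{\boldsymbol 0}[{\rm dP}_{\thetab}/{\rm dP}_{\boldsymbol 0}\,\vert\,{\cal B}_0]$ of ${\cal S}_{{\cal B}_0}$ in the $L^1$--$L^\infty$ pairing precisely when ${\rm E}_{\thetab}(Y)=0$ for all $\thetab$, and by linearity this extends from the generators to all of ${\cal S}_{{\cal B}_0}$. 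Thus the annihilator of ${\cal S}_{{\cal B}_0}$ inside $L^\infty({\cal B}_0,{\rm P}_{\boldsymbol 0})$ is exactly the set of bounded ${\cal B}_0$-measurable $Y$ with ${\rm E}_{\thetab}(Y)=0$ for all $\thetab$.

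Next I would invoke Mazur's theorem as the engine converting norm-denseness into this functional-analytic condition. Since ${\cal S}_{{\cal B}_0}$ is convex (it is a linear subspace), its $\Vert\cdot\Vert_1$-closure coincides with its weak closure, so ${\cal S}_{{\cal B}_0}$ is dense in $L^1({\cal B}_0,{\rm P}_{\boldsymbol 0})$ iff it is weakly dense; and a linear subspace fails to be weakly dense exactly when some nonzero continuous linear functional vanishes on it. Identifying those functionals with $L^\infty({\cal B}_0,{\rm P}_{\boldsymbol 0})$, denseness of ${\cal S}_{{\cal B}_0}$ is therefore equivalent to the triviality of its annihilator. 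Combining this with the identity above, denseness is equivalent to the statement that the only bounded ${\cal B}_0$-measurable $Y$ with ${\rm E}_{\thetab}(Y)=0$ for all $\thetab$ is $Y=0$, which is precisely bounded completeness of ${\cal B}_0$ in ${\cal E}$. Running both implications through the annihilator characterization closes the ``iff.''

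The computational content is light, and I expect the subtleties rather than the obstacles to lie in three places: (i) justifying the dual identification $L^1({\cal B}_0,{\rm P}_{\boldsymbol 0})^\ast=L^\infty({\cal B}_0,{\rm P}_{\boldsymbol 0})$ from the finiteness of ${\rm P}_{\boldsymbol 0}$; (ii) verifying that the generators of ${\cal S}_{{\cal B}_0}$ are the ${\cal B}_0$-\emph{conditional} expectations rather than the raw Radon--Nikodym derivatives, so that the conditioning identity is exactly what aligns the ${\cal B}_0$-measurable completeness condition with the annihilator taken inside $L^\infty({\cal B}_0,{\rm P}_{\boldsymbol 0})$; and (iii) ensuring that ``${\rm P}_{\boldsymbol 0}$-a.s.\ zero'' and ``${\rm P}_{\thetab}$-a.s.\ zero for all $\thetab$'' coincide, which holds because ${\rm P}_{\boldsymbol 0}$ is a privileged dominating measure. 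None of these is a genuine difficulty; the main point is simply to present Mazur's theorem as the clean reduction of norm-denseness to the annihilator condition, exactly as the paper's emphasis suggests.
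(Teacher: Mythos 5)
Your argument is correct and is essentially the paper's own: both rest on the identity ${\rm E}_{\boldsymbol 0}\big[Y\,{\rm E}_{\boldsymbol 0}[{\rm dP}_{\thetab}/{\rm dP}_{\boldsymbol 0}\,\vert\,{\cal B}_0]\big]={\rm E}_{\thetab}(Y)$, the duality $L^1({\cal B}_0,{\rm P}_{\boldsymbol 0})^\ast=L^\infty({\cal B}_0,{\rm P}_{\boldsymbol 0})$, and a Hahn--Banach/Mazur separation step reducing denseness of ${\cal S}_{{\cal B}_0}$ to triviality of its annihilator. The only cosmetic difference is that you run both implications symmetrically through the annihilator (invoking Mazur in the ``norm closure equals weak closure'' form), whereas the paper proves the easy direction directly and reserves Mazur's separating-hyperplane form for the converse.
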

 
 The proof of this proposition relies on the so-called Mazur Theorem or Mazur Lemma (see, e.g.,  \cite{Bre83}  page 38 or \cite{EkTem76} page  389) which, in this context, takes the following form.
 
  \begin{theorem*}{\hspace{-2.5mm}{\rm (Mazur)}}  Let $\cal M$ be a vector subspace in some topological vector space\footnote{A normed linear space  is a topological vector space for the topology induced by its norm. } $\cal V$. Let~$\cal K$ denote an open nonempty convex subset  of $\cal V$ such that ${\cal K}\cap {\cal M}=\emptyset$. Then, there exists a closed hyperplane ${\cal H}\subseteq {\cal V}$ that contains $\cal M$ but is disjoint from~$\cal K$.
      \end{theorem*}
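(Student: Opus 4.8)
The statement is the geometric (separation) form of the Hahn--Banach theorem, and my plan is to deduce it from the \emph{analytic} Hahn--Banach theorem --- the dominated-extension result for linear functionals --- by passing through the Minkowski gauge of the open convex set $\cal K$, and then to exploit the fact that $\cal M$ is a \emph{linear} subspace in order to force the separating functional to vanish identically on $\cal M$.

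First I would reduce the problem to separating the origin from a single open convex set. Consider
\[
{\cal C}\coloneqq {\cal K}-{\cal M}=\{k-m:\, k\in{\cal K},\, m\in{\cal M}\}.
\]
Then $\cal C$ is open, being the union $\bigcup_{m\in{\cal M}}({\cal K}-m)$ of translates of the open set $\cal K$; it is convex, as a difference of convex sets; and $0\notin{\cal C}$ precisely because ${\cal K}\cap{\cal M}=\emptyset$. Fixing a point $c_0\in{\cal C}$ and setting ${\cal D}\coloneqq{\cal C}-c_0$, I obtain an open convex neighbourhood of the origin with $-c_0\notin{\cal D}$.

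Next I would introduce the Minkowski gauge $p(x)\coloneqq\inf\{t>0:\, x/t\in{\cal D}\}$. Because $\cal D$ is an open convex neighbourhood of $0$, one checks that $p$ is finite-valued (since $\cal D$ is absorbing), positively homogeneous, subadditive, continuous, and satisfies ${\cal D}=\{x:\, p(x)<1\}$. On the one-dimensional subspace ${\cal G}\coloneqq\mathbb{R}(-c_0)$ define the linear functional $g(-t c_0)\coloneqq t$; using $p(-c_0)\ge 1$ (since $-c_0\notin{\cal D}$) one verifies that $g\le p$ on $\cal G$. The analytic Hahn--Banach theorem then extends $g$ to a linear functional $f$ on all of $\cal V$ with $f\le p$ everywhere; the domination $f\le p$, together with continuity of $p$, yields continuity of $f$, and translating back (using $f(c_0)=-1$) one gets $f(z)<0$ for every $z\in{\cal C}$, that is,
\[
f(k)<f(m)\qquad\text{for all } k\in{\cal K},\ m\in{\cal M}.
\]

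Finally I would upgrade this to a hyperplane containing $\cal M$. Fixing any $k\in{\cal K}$, the inequality $f(k)<f(m)$ shows that the linear functional $f$ is bounded below on the subspace $\cal M$; but a linear functional on a subspace that is bounded below must vanish identically there, since otherwise $f(t m)\to-\infty$ along a suitable ray. Hence $f\equiv 0$ on $\cal M$, so that $f(m)=0$ and therefore $f(k)<0$ for every $k\in{\cal K}$. The set ${\cal H}\coloneqq\{x\in{\cal V}:\, f(x)=0\}=\ker f$ is then a closed hyperplane --- closed because $f$ is continuous, and a genuine hyperplane because $f\not\equiv 0$ --- which contains $\cal M$ and is disjoint from $\cal K$, as required. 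The main obstacle is the gauge step: establishing, in the general topological-vector-space setting, that $p$ is finite, sublinear, and above all \emph{continuous}, together with the domination $g\le p$ needed to invoke the analytic theorem; continuity of $p$ is exactly what propagates through the extension to make $f$ continuous and hence $\cal H$ closed.
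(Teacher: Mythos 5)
Your proof is correct: the reduction to ${\cal C}={\cal K}-{\cal M}$ (open, convex, omitting the origin), the Minkowski gauge of ${\cal D}={\cal C}-c_0$, the one-dimensional functional dominated by the gauge, the analytic Hahn--Banach extension with continuity inherited from continuity of $p$, and the final observation that a linear functional bounded below on the subspace ${\cal M}$ must vanish on it are all sound, and together they yield exactly the closed hyperplane ${\cal H}=\ker f$ required. The paper itself states Mazur's theorem without proof, deferring to \cite{Bre83} (page 38) and \cite{EkTem76}, and your argument is precisely the standard gauge-based separation proof found in those references, so your route coincides with the paper's (implicit) one.
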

 
 \begin{proof} (i) Assume that ${\cal S}_{{\cal B}_0}$ is dense in~$L^1({\cal B}_0, {\rm P}_{\boldsymbol 0})$. Let $Y$ 
  be (essentially) bounded with 
 \begin{equation}\label{EY=0}
 {\rm E}_{\thetab}(Y)=0\quad\text{ for all $\thetab\in\Thetab$}. \end{equation}
 This $Y$ determines a continuous linear functional $f_Y$ on~$L^1({\cal B}_0, {\rm P}_{\boldsymbol 0})$  (an element of the~$L^\infty({\cal B}_0, {\rm P}_{\boldsymbol 0})$ dual of~$L^1({\cal B}_0, {\rm P}_{\boldsymbol 0})$), mapping~$Z\in L^1({\cal B}_0, {\rm P}_{\boldsymbol 0})$ to~$f_Y(Z) \coloneqq {\rm E}_{\boldsymbol 0}(YZ)$. Under~\eqref{EY=0}, that functional vanishes on~${\cal S}_{{\cal B}_0}$: indeed, for any $\thetab$, 
 $$ {\rm E}_{\boldsymbol 0}\left[Y{\rm E}_{\boldsymbol 0}\left[{\rm dP}_{\thetab}/{\rm dP}_{\boldsymbol 0}\left\vert\right. {\cal B}_0\right]\right] =  {\rm E}_{\thetab}(Y) =0.$$  Since ${\cal S}_{{\cal B}_0}$ is dense in~$L^1({\cal B}_0, {\rm P}_{\boldsymbol 0})$, this continuous functional vanishes on all of $L^1({\cal B}_0, {\rm P}_{\boldsymbol 0})$.  Now, assume that ${\rm P}_{\thetab}(Y\neq  0)>0$ for some $\thetab$: then,  ${\rm P}_{\boldsymbol 0}(Y\neq  0)>0$ and,   in view of~\eqref{EY=0},   the indicator $Z^+_Y$ of $[Y> 0]$ cannot be ${\rm P}_{\thetab}$-a.s.\ zero.   Since  $Y$  is~${\cal B}_0$-measurable, $Z^+_Y\in~\!L^1({\cal B}_0, {\rm P}_{\boldsymbol 0})$. Clearly,~$f_Y(Z_Y)= {\rm E}_{\boldsymbol 0}(YZ_Y)>0$, which contradicts the fact that $f_Y(Z)=0$ for  all~$Z$ in~$L^1({\cal B}_0, {\rm P}_{\boldsymbol 0})$. Thus, $Y=0$ ${\rm P}_{\thetab}$-a.s. for all $\thetab$ and ${\cal B}_0$ is complete.  
  
 (ii) Conversely, let us assume that ${\cal S}_{{\cal B}_0}$ is not dense in~$L^1({\cal B}_0, {\rm P}_{\boldsymbol 0})$. Letting ${\cal V}=L^1({\cal B}_0, {\rm P}_{\boldsymbol 0})$ and~${\cal M}={\cal S}_{{\cal B}_0}$, it follows from Mazur's theorem that there exists a nonempty  ball  ${\cal K}$ in~$L^1({\cal B}_0, {\rm P}_0)$ that does not intersect ${\cal M}={\cal S}_{{\cal B}_0}$ and a closed hyperplane  $\cal H$   such that ${\cal S}_{{\cal B}_0}\subseteq {\cal H}$ and ${\cal K}\cap {\cal H}=\emptyset$.  Hyperplanes in ${\cal V}=L^1({\cal B}_0, {\rm P}_{\boldsymbol 0})$ are of the form
\begin{equation}\label{calH}{\cal H}=\{Z\in L^1({\cal B}_0, {\rm P}_0) : f_Y(Z):= {\rm E}_{\boldsymbol 0}[YZ]=0\}\end{equation}
for some  (essentially bounded)   $Y\in   L^1({\cal B}_0, {\rm P}_{\boldsymbol 0})$ which is not ${\rm P}_{\boldsymbol 0}$-a.s.\ zero (else, \eqref{calH} would not be a hyperplane). Hence, there exists~$Y\in   L^1({\cal B}_0, {\rm P}_{\boldsymbol 0})$ such that $f_Y$ 
vanishes  on~${\cal S}_{{\cal B}_0}$ but not on the ball ${\cal K}$.   Letting $Z_0= {\rm E}_{\boldsymbol 0}\left[{\rm dP}_{\thetab}/{\rm dP}_{\boldsymbol 0}\left\vert\right. {\cal B}_0\right]$, we have~$Z_0\in{\cal S}_{{\cal B}_0 }$ hence
  $$0=f_Y(Z_0)= {\rm E}_{\boldsymbol 0}(YZ_0)={\rm E}_{\boldsymbol 0}(Y{\rm E}_{\boldsymbol 0}\left[{\rm dP}_{\thetab}/{\rm dP}_{\boldsymbol 0}\left\vert\right. {\cal B}_0\right])={\rm E}_{\thetab}(Y)$$  for any $\thetab\in\Thetab$. Since  $Y$ is not ${\rm P}_{\boldsymbol 0}$-a.s.\ zero,~${\cal B}_0$ is not boundedly complete in $\E$.   \end{proof}
 
 \section{From  bounded complete subexperiments to full bounded completeness} In this section, we demonstrate the value of Proposition~\ref{prop1} as a tool for establishing further properties about completeness, sufficiency, and ancillarity. 
    As in Section~\ref{sec2}, consider   subexperiments $\E_{{\cal B}_0}$ of $\E$, ${\cal B}_0\subseteq{\cal B}$. The following result  is quite helpful in proving bounded completeness in the case of an infinite-dimensional parameter space $\Upsilon$. 

Recall that a sequence~$\left(\FieldSub_k\right)_{k\in\rN}$ of~$\sigma$-fields  is a \emph{convergent filtration} in $\Field_{0}$ if $\FieldSub_k\!\subseteq~\!\!\FieldSub_{k+1}\!\!\subset~\!\!\Field_{0}$ for each~$k\in\rN$ and~$\bigcup_{k\in{\mathbb N}}\FieldSub_k=~\!\Field_{0}$. Using the same notation as in Section~\ref{sec2}, we then have the following result. 

\begin{prop}\label{lem:CompletenessInfinite} Consider the experiment $\E$ in \eqref{defexperiment}. 
Denoting by $\left(\FieldSub_k\right)_{k\in\rN}$  a convergent filtration in $\Field_{0}\subseteq {\cal B}$ satisfying ${\cal S}_{\FieldSub_k}\subseteq {\cal S}_{\Field_{0}}$ for all~$k\in~\!\rN$. Then~$\E_{{\cal B}_0}$ is boundedly complete in $\E$ iff each restricted subexperiment $\E_{\FieldSub_k}$ is.
%
\end{prop}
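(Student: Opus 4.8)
The plan is to use Proposition~\ref{prop1} to translate both bounded completeness statements into $L^1$-denseness statements, and then to prove the resulting equivalence by hand. By Proposition~\ref{prop1}, $\E_{{\cal B}_0}$ is boundedly complete in $\E$ iff ${\cal S}_{{\cal B}_0}$ is dense in $L^1({\cal B}_0,{\rm P}_{\boldsymbol 0})$, and each $\E_{\FieldSub_k}$ is boundedly complete iff ${\cal S}_{\FieldSub_k}$ is dense in $L^1(\FieldSub_k,{\rm P}_{\boldsymbol 0})$. Thus the proposition reduces to showing that ${\cal S}_{{\cal B}_0}$ is dense in $L^1({\cal B}_0,{\rm P}_{\boldsymbol 0})$ if and only if ${\cal S}_{\FieldSub_k}$ is dense in $L^1(\FieldSub_k,{\rm P}_{\boldsymbol 0})$ for every $k\in\rN$. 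The two analytic inputs I would use are the tower property together with the $L^1$-contractivity of conditional expectation (via Jensen's inequality), and L\'evy's upward martingale convergence theorem.

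For the ``only if'' direction I would fix $k$, take $h\in L^1(\FieldSub_k,{\rm P}_{\boldsymbol 0})$ and $\e>0$. Since $\FieldSub_k\subseteq{\cal B}_0$, $h$ also lies in $L^1({\cal B}_0,{\rm P}_{\boldsymbol 0})$, so the assumed denseness of ${\cal S}_{{\cal B}_0}$ lets me pick $s=\sum_j c_j\,{\rm E}_{\boldsymbol 0}[{\rm dP}_{\thetab_j}/{\rm dP}_{\boldsymbol 0}\mid{\cal B}_0]\in{\cal S}_{{\cal B}_0}$ with $\Vert h-s\Vert_1<\e$. Conditioning on $\FieldSub_k$ and using the tower property (legitimate because $\FieldSub_k\subseteq{\cal B}_0$) turns $s$ into ${\rm E}_{\boldsymbol 0}[s\mid\FieldSub_k]=\sum_j c_j\,{\rm E}_{\boldsymbol 0}[{\rm dP}_{\thetab_j}/{\rm dP}_{\boldsymbol 0}\mid\FieldSub_k]\in{\cal S}_{\FieldSub_k}$, while $h={\rm E}_{\boldsymbol 0}[h\mid\FieldSub_k]$ by $\FieldSub_k$-measurability of $h$. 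Contractivity then gives $\Vert h-{\rm E}_{\boldsymbol 0}[s\mid\FieldSub_k]\Vert_1=\Vert{\rm E}_{\boldsymbol 0}[h-s\mid\FieldSub_k]\Vert_1\le\Vert h-s\Vert_1<\e$, so ${\cal S}_{\FieldSub_k}$ is dense. I note that this direction does not invoke the inclusion hypothesis ${\cal S}_{\FieldSub_k}\subseteq{\cal S}_{{\cal B}_0}$.

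For the ``if'' direction I would take $g\in L^1({\cal B}_0,{\rm P}_{\boldsymbol 0})$ and $\e>0$. Because $(\FieldSub_k)$ is a convergent filtration, the $\sigma$-field generated by $\bigcup_k\FieldSub_k$ is ${\cal B}_0$, so L\'evy's upward theorem yields ${\rm E}_{\boldsymbol 0}[g\mid\FieldSub_k]\to {\rm E}_{\boldsymbol 0}[g\mid{\cal B}_0]=g$ in $L^1({\cal B}_0,{\rm P}_{\boldsymbol 0})$; hence I can fix $k$ with $\Vert g-{\rm E}_{\boldsymbol 0}[g\mid\FieldSub_k]\Vert_1<\e/2$. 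Writing $g_k:={\rm E}_{\boldsymbol 0}[g\mid\FieldSub_k]\in L^1(\FieldSub_k,{\rm P}_{\boldsymbol 0})$ and using denseness of ${\cal S}_{\FieldSub_k}$, I pick $s\in{\cal S}_{\FieldSub_k}$ with $\Vert g_k-s\Vert_1<\e/2$. Here the inclusion hypothesis ${\cal S}_{\FieldSub_k}\subseteq{\cal S}_{{\cal B}_0}$ is exactly what guarantees $s\in{\cal S}_{{\cal B}_0}$, and the triangle inequality then gives $\Vert g-s\Vert_1<\e$, proving ${\cal S}_{{\cal B}_0}$ dense.

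The tower/contraction manipulations are routine; the single genuine ingredient is L\'evy's upward convergence theorem, which supplies the $L^1$-approximation of an arbitrary ${\cal B}_0$-measurable integrable function by its conditional expectations along the filtration. The only obstacle I anticipate is bookkeeping: making sure the martingale limit is $g$ itself (which holds precisely because the filtration generates ${\cal B}_0$) and that the assumption ${\cal S}_{\FieldSub_k}\subseteq{\cal S}_{{\cal B}_0}$ is correctly placed --- it is needed only at the final step of the ``if'' direction, to recognise the approximant, built inside ${\cal S}_{\FieldSub_k}$, as an element of ${\cal S}_{{\cal B}_0}$.
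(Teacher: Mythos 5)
Your proof is correct and follows essentially the same route as the paper's: the substantive (``if'') direction uses $L^1$ martingale convergence along the filtration to reduce to a finite level $\FieldSub_k$, approximates there using denseness of ${\cal S}_{\FieldSub_k}$ (via Proposition~\ref{prop1}), and invokes the inclusion ${\cal S}_{\FieldSub_k}\subseteq{\cal S}_{{\cal B}_0}$ together with the triangle inequality to conclude, exactly as in the paper. The only (immaterial) difference is the ``only if'' direction, which the paper dismisses as trivial directly from the definition of bounded completeness on a sub-$\sigma$-field, whereas you route it through Proposition~\ref{prop1} and the $L^1$-contractivity of conditional expectation; both arguments are valid and, as you note, neither requires the inclusion hypothesis.
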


\begin{proof}
	The ``only if" parts of the proposition is  trivial.  Turning to the  ``if" part, let us establish that if $\E_{{\cal F}_k}$ is boundedly complete in $\E$ for all~$k$, then~${\cal S}_{\Field_0}$ is dense in~$L^1(\Field_0, {\rm P}_0)$ which, in view of Proposition~\ref{prop1}, implies that~${\E}_{\Field_0}$ is boundedly complete in~$\E$. Let $Y\in~\!L^1({\Field}_0,{\rm P}_{\boldsymbol 0})$. From Doob's martingale convergence theorem (\cite{Doob53}, Chapter VII Theorem 4.3), we know that~$Y_k\coloneqq\expect_{\boldsymbol 0}\left[Y\vert\FieldSub_k\right]$ converges to~$Y$ in~$L^1({\Field}_0,{\rm P}_{\boldsymbol 0})$. Namely, for any~$\delta >~\!0$, there exists $K=K(\delta)$ such that~${\rm E}_{\boldsymbol 0}\vert  Y_k - Y\vert\leq\delta/2$ for~$k\geq K$. Con\-sider~$Y^\prime \coloneqq Y_K$ (an element of $L^1({\Field}_0,{\rm P}_{\boldsymbol 0})$). Since~${\E}_{{\cal F}_k}$ is boundedly complete in $\E$, it follows from Proposition~\ref{prop1} that ${\cal S}_{{\cal F}_k}$ is dense in $L^1({\cal F}_k,{\rm P}_{\boldsymbol 0})$; hence, for any $\delta >0$, there exists  some~$Y^{\prime\prime}\in {\cal S}_{{\cal F}_k}$ such that~${\rm E}_{\boldsymbol 0}\vert Y^{\prime\prime} - Y^{\prime} \vert \leq\delta/2$. Therefore,
	\begin{equation}\label{close}{\rm E}_{\boldsymbol 0}\vert Y^{\prime\prime} - Y \vert \leq {\rm E}_{\boldsymbol 0}\vert Y^{\prime\prime} - Y^{\prime}\vert + {\rm E}_{\boldsymbol 0}\vert Y^{\prime} - Y\vert\leq \delta. 
\end{equation}
Now, ${\cal S}_{{\cal F}_k}\subseteq {\cal S}_{{\cal B}_0}$. Hence, for all $Y\in L^1(\Field_0, {\rm P}_0)$, this $Y^{\prime\prime}$ also is an element of~${\cal S}_{{\cal B}_0}$. In view of~\eqref{close}, it  is arbitrarily close, in the~$L^1(\Field_0, {\rm P}_0)$ sense, to $Y$, and ${\cal S}_{\Field_0}$ thus is dense in~$L^1(\Field_0, {\rm P}_0)$.  The  claim   follows from  applying Proposition~\ref{prop1} again.\end{proof}


\newcommand{\sampleSpace}{{\cal X}}
\newcommand{\LRSpace}{{\cal S}}
\newcommand{\dee}{\mbox{d}}
\newcommand{\ExpO}{{\rm E}_{\boldsymbol 0}}
\section{Bounded completeness of Brownian drift experiments}\label{sec:BrownianDrift}

\noindent As an illustration of the above results, consider the Brownian drift experiment $\E_{\text{\tiny{\rm drift}}}$ defined on the sample\linebreak space~$\sampleSpace=C[0,1]$, the set of real-valued continuous functions on the unit interval, endowed with the $\sigma$-field $\Field$ generated by the topology of uniform convergence. The parameter space is  the set~$\Thetab=D[0,1]$ of {\it c\`adl\`ag} functions on $[0,1]$. For $\thetab\in\Thetab$ the probability measure ${\rm P}_{\thetab}$ is defined by $X(0)=0$ and
\begin{equation}\label{eqn:BrownianDrift}
	\dee X(u) = \thetab(u)\dee u + \dee W(u),
\end{equation}
where $W$ is a standard Brownian motion on $\left(\sampleSpace,\Field\right)$.

To show, via Proposition~\ref{lem:CompletenessInfinite}, that this experiment is boundedly complete,   consider    restricted experiments  
 under which  $X$ is observed on a finite grid of $u$ values  only. For fixed~$k\in{\mathbb N}$, define the $(2^k+1)$ grid points $u_{ik}=i2^{-k}$, $i=0,\ldots,2^k$, and the $2^k$ increments 
$$\Delta X(u_{ik}):= X(u_{ik})-X(u_{i-1,k}),\quad i=1,\ldots,2^k.$$ Restricted experiments $\E_{{\cal F}_k}$ are obtained via the sub-$\sigma$-fields  
$$\FieldSub_k:=\sigma\left(\Delta X(u_{ik}):~i=1,\ldots,2^k\right)=\sigma\left(X(u_{ik}):~i=0,\ldots,2^k\right) \subset \cal B.$$
 Note that $\FieldSub_k\subset\FieldSub_{k+1}$ and $\bigcup_k\FieldSub_k=\Field$; linearly interpolating the restricted observations~$X(u_{ik})$, $i=0,\ldots,2^k$, indeed, yields a uniform approximation of $X$ in $C[0,1]$.

Each of these restricted experiments  $\E_{\FieldSub_k}$ is a Gaussian shift experiment under which the~$2^k$-dimensional  random vector $\left(\Delta X(u_{ik})\right)_{i=1,\ldots,2^k}$  of  increments is observed. That vector is Gaussian 
 with unit covariance  matrix and   unrestricted mean in~${\mathbb R}^{2^k}\!$,  hence is complete (a fortiori, boundedly complete). Finally, $\LRSpace_{\FieldSub_k}\subset\LRSpace_\FieldSub$ since $\ExpO\left[\dee\prob_{\thetab}/\dee\prob_{\bf 0}\vert\FieldSub_k\right]=\dee\prob_{\tilde\thetab}/\dee\prob_{\bf 0}$ for
\begin{equation}
	\tilde\thetab(u)=\int_{u_{i,k-1}}^{u_{ik}}\thetab(v)\dee v,\quad u_{i,k-1}\leq u<u_{ik}.
\end{equation}
Bounded completeness of $\E_{\text{\tiny{\rm drift}}}$ thus follows from Proposition~\ref{lem:CompletenessInfinite}. \medskip

To provide a link to the literature on stochastic processes, remark that completeness of the Brownian motion experiment   also could be derived from the fact that stochastic exponentials of step functions are   \emph{total}; see Lemma~3.1 in Chapter~V of \citet{RevYor99}. Details are left to the reader.

\bibliographystyle{imsart-nameyear}
\bibliography{references}

\end{document}